\theoremstyle{plain}
\newtheorem{lema}{Lemma}[section]
\newtheorem{teo}[lema]{Theorem}
\newtheorem*{intro1}{Theorem \ref{cororegular}}
\newtheorem*{intro2}{Theorem \ref{main5}}
\newtheorem{coro}[lema]{Corollary}
\theoremstyle{remark}
\newtheorem{obs}[lema]{Remark}
\theoremstyle{definition}
\newtheorem{ej}[lema]{Example}
\newcommand{\kker}{\textrm{ker}}
\newcommand{\Deck}{\textrm{Deck}}
\newcommand{\ee}{\textbf{E}}
\newcommand{\hh}{\mathcal{H}}
\newcommand{\kp}{\mathcal{K}}
\newcommand{\x}{\mathcal{X}}
\newcommand{\degg}{\textrm{h}}%{\textrm{deg}}
\def\Z{\mathbb{Z}}
\def\set#1{\{#1\}}
\begin{document}

\title[The second homotopy group in terms of colorings]{The second homotopy group in terms of colorings of locally finite models and new results on asphericity}

\author[J.A. Barmak]{Jonathan Ariel Barmak}
\author[E.G. Minian]{Elias Gabriel Minian}

\address{Departamento  de Matem\'atica--IMAS\\
 FCEyN, Universidad de Buenos Aires\\ Buenos
Aires, Argentina}

\email{jbarmak@dm.uba.ar}
\email{gminian@dm.uba.ar}

\begin{abstract}
We describe the second homotopy group of any CW-complex $K$ by analyzing the universal cover of a locally finite model of $K$ using the notion of $G$-coloring of a partially ordered set. As applications we prove a generalization of the Hurewicz theorem, which relates the homotopy and homology of non-necessarily simply-connected complexes, and derive new results on asphericity for two-dimensional complexes and group presentations.
\end{abstract}

\subjclass[2010]{55Q05, 57M10, 57M20,  06A06, 55U10}

\keywords{Covering maps, second homotopy group, asphericity, posets, finite topological spaces.}

\maketitle

\section{Introduction} \label{sectionintro}

Every CW-complex has a \it locally finite model. \rm This is a classical result of McCord \cite[Theorem 3]{Mcc} who considered for any regular CW-complex $K$ the space $\x (K)$ of cells of $K$ with some specific topology, and defined a weak homotopy equivalence $\mu :K \to \x (K)$. The space $\x (K)$ can be viewed as a poset. The interaction between the topological and combinatorial nature of $\x(K)$ allows one to develop new techniques to attack problems of homotopy theory of CW-complexes (see \cite{Bar2}). 

%In \cite{BM} we described the simply homotopy theory of polyhedra in terms of combinatorial moves of the finite models and in \cite{Bar2} Barmak obtained new results on Quillen's conjecture on the poset of $p$-subgroups of a finite group and on the Andrews-Curtis conjecture using this kind of techniques.

In this paper we use locally finite models to describe the second homotopy group of CW-complexes. The notion of $G$-coloring of a poset allows us to classify all the regular coverings of the space $\x (K)$. In particular, we obtain a description of the  universal cover $E$ of $\x(K)$ which is used to find an expression for the boundary map of a chain complex whose homology coincides with the singular homology of $E$. By the Hurewicz theorem and McCord's result, $\pi_2 (K)=H_2(E)$. One of the applications of our description is the following result which reduces to the classical Hurewicz theorem when the complex is simply-connected.

\begin{intro1} 
Let $K$ be a connected regular CW-complex of dimension $2$ and let $K'$ be its barycentric subdivision. Consider the full (one-dimensional) subcomplex $L$ of $K'$ spanned by the barycenters of the $1$-cells and $2$-cells. If the inclusion of each component of $L$ in $K'$ induces the trivial morphism between the fundamental groups, then $\pi_2 (K)=\Z [\pi_1 (K)]\otimes H_2(K)$.  
\end{intro1}

We also obtain results on asphericity of $2$-complexes and group presentations. Recall that a connected $2$-complex $K$ is aspherical if $\pi_2(K)=0$.
 
\begin{intro2} 
Let $K$ be a $2$-dimensional regular CW-complex and let $K'$ be its barycentric subdivision. Consider the full (one-dimensional) subcomplex $L\subseteq K'$ spanned by the barycenters of the $2$-cells of $K$ and the barycenters of the $1$-cells which are faces of exactly two $2$-cells. Suppose that for every connected component $M$ of $L$, $i_*(\pi_1 (M))\leq \pi_1 (K')$ contains an element of infinite order, where $i_*:\pi_1 (M) \to \pi_1 (K')$ is the map
induced by the inclusion. Then $K$ is aspherical.  
\end{intro2}

From this result one can deduce for example, the well-known fact that all compact surfaces different from $S^2$ and $\mathbb{R} P^2$ are aspherical. 

To put our results in perspective, one should recall that it is an open problem, originally posted by Whitehead, whether any subcomplex of an aspherical $2$-dimensional CW-complex is itself aspherical . We refer the reader to \cite{Bo,Ho,Wh} for more details on Whitehead's asphericity question.

In Theorem \ref{main6} we prove a result on asphericity of group presentations which resembles the homological description of $\pi_2$ by Reidemeister chains (see \cite{Bo,Si}).

\section{Colorings and a description of the second homotopy group}

A poset $X$ will be identified with a topological space with the same underlying set as $X$ and topology generated by the basis $\{U_x\}_{x\in X}$, where $U_x=\{y\in X \ | \ y\le x\}$. If $X$ and $Y$ are posets, it is easy to see that a map $X\to Y$ is continuous if and only if it is order preserving. We denote by $\kp (X)$ the simplicial complex whose simplices are the finite chains of the poset $X$ (i.e. the classifying space of $X$). A result of McCord \cite[Theorem 2]{Mcc} shows that there is a natural weak homotopy equivalence $\kp (X)\to X$. Given a regular CW-complex $K$, its \textit{face poset} $\x (K)$ is the poset of cells of $K$ ordered by the face relation. Note that the classifying space of the poset $\x (K)$ is the barycentric subdivision of $K$ and therefore, there is a weak homotopy equivalence $\mu :K \to \x (K)$. In particular the homology groups of the poset $\x (K)$ coincide with those of $K$. If $X=\x (K)$ is the face poset of a regular CW-complex, we can compute its homology by computing the cellular homology of $K$ in the standard way (see \cite[IX, \S 7]{Mas}), namely, for each $n\ge 0$ let $C_n(X)$ be the free $\mathbb{Z}$-module generated by the points $x \in X$ of height $\degg (x)=n$. Recall that $\degg (x)$ is one less than the maximum number of points in a chain with maximum $x$. Choose for each edge $(y,x)$ in the Hasse diagram of $X$ a number $[x:y]\in \{1,-1\}$ in such a way that for every $x\in X$ of height $1$, $$\sum\limits_{y\prec x} [x:y] =0,$$ and for each pair $x,z\in X$ with $\degg(x)=\degg(z)+2$, $$\sum\limits_{z\prec y \prec x} [x:y][y:z]  =0.$$

Here $y\prec x$ means that $y<x$ and there is no $y<z<x$. The differential $d:C_n(X)\to C_{n-1}(X)$ is defined by $d(x)=\sum\limits_{y\prec x} [x:y] y$ in each basic element $x$. The homology of this chain complex is then the singular homology of the poset $X$ (viewed as a topological space). The number $[x:y]$ is the incidence of the cell $y$ in the cell $x$ of $K$ for certain orientations.

Suppose $p:E\to X=\x(K)$ is a topological covering ($\x (K)$ considered as a topological space). Using that $p$ is a local homeomorphism it is easy to prove that $E$ is also the space associated to a poset. Moreover, for each $x\in E$, $p|_{U_x}: U_x\to U_{p(x)}$ is a homeomorphism. In particular $E$ is the face poset of a regular CW-complex (\cite[Proposition 4.7.23]{Bjo}). If $y\prec x$ in $E$, $p(y)\prec p(x)$ in $X$. Given a choice of the incidences in $X$, we define $[x:y]=[p(x):p(y)]$, which is a coherent choice for the incidences in $E$. Let $p:E\to \x(K)$ be a regular covering and let $G$ be its group of deck (covering) transformations. Since $G$ acts freely on $E$ and transitively on each fiber, $C_n(E)=\Z G \otimes C_n(X)$ is a free $\Z G$-module with basis $\{ x \in X \ | \ \degg(x)=n \}$. The differential $d:C_n(E)\to C_n(E)$ is a homomorphism of $\Z G$-modules. 

In \cite{paper1} we characterized the regular coverings of locally finite posets (i.e.  posets with finite $U_x$, for each $x$) in terms of \textit{colorings}. We recall this result as it will be required in the description of the universal cover.

Let $X$ be a locally finite poset. We denote by $\ee (X)$ the set of edges in the Hasse diagram of $X$. An \textit{edge-path} in $X$ is a sequence $\xi =(x_0,x_1)(x_1,x_2) \ldots (x_{k-1},x_k)$ of edges, or opposites of edges. The set of closed edge-paths from a point $x_0 \in X$, with certain identifications and the operation given by concatenation, is a group $\hh (X,x_0)$ naturally isomorphic to $\pi _1(X,x_0)$ (see \cite{BM1} and \cite{paper1}). This construction resembles the definition of the edge-path group of a simplicial complex. Given a group $G$, a \textit{$G$-coloring} of a locally finite poset $X$ is a map $c$ which assigns a \textit{color} $c(y,x)\in G$ to each edge $(y,x)\in \ee (X)$. Given a $G$-coloring, if $y\prec x$ we define $c(x,y)=c(y,x)^{-1}\in G$. A $G$-coloring of $X$ induces a \textit{weight} map which maps an edge-path $\xi =(x_0,x_1)(x_1,x_2) \ldots (x_{k-1},x_k)$ to $w_c(\xi)=c(x_0,x_1)c(x_1,x_2)\ldots c(x_{k-1},x_k)$. A $G$-coloring $c$ is said to be \textit{admissible} if for any two chains $x=x_1\prec x_2\prec \ldots \prec x_k=y,$ $x=x_1'\prec x_2'\prec \ldots \prec x_l'=y$ with same origin and same end, the weights of the edge-paths induced by the chains coincide. An admissible $G$-coloring $c$ induces a homomorphism $W_c: \hh (X,x_0) \to G$ which maps the class of a closed edge-path to its weight. The coloring $c$ is \textit{connected} if $W_c$ is an epimorphism.

Two $G$-colorings $c$ and $c'$ of $X$ are \textit{equivalent} if there exists an automorphism $\varphi :G\to G$ and an element $g_x\in G$ for each $x\in X$ such that $c'(x,y)=\varphi (g_x c(x,y) g_y^{-1})$ for each $(x,y)\in \ee (X)$.

\begin{teo}(\cite[Corollary 3.5]{paper1})
Let $X$ be a connected locally finite poset and let $G$ be a group. There exists a correspondence between the set of equivalence classes of regular coverings $p:E\to X$ of $X$ with $\Deck(p)$ isomorphic to $G$ and the set of equivalence classes of admissible connected $G$-colorings of $X$. 
\end{teo}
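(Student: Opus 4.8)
\emph{Overall strategy.} The plan is to exhibit the correspondence by constructing explicit maps in both directions and then checking they are mutually inverse modulo the stated equivalences. The conceptual backbone is the classical classification of regular coverings: since $X$ is locally finite, each $U_x$ is a finite poset with maximum $x$, hence contractible, so $X$ is locally contractible and therefore admits the usual theory of regular coverings, which are classified by epimorphisms $\pi_1(X,x_0)\twoheadrightarrow G$ taken up to post-composition with $\mathrm{Aut}(G)$; combining this with the identification $\pi_1(X,x_0)\cong\hh(X,x_0)$ (recalled in the excerpt) reduces everything to matching admissible connected colorings (up to equivalence) with such epimorphisms.

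\emph{From a coloring to a covering.} Given an admissible connected $G$-coloring $c$, I would let $E_c$ have underlying set $X\times G$ with $p\colon E_c\to X$ the first projection, and declare $(y,g)\le(x,h)$ precisely when $y\le x$ in $X$ and $h=g\,w_c(\xi)$ for an edge-path $\xi$ running through a chain from $y$ to $x$. Admissibility is exactly what makes this relation well defined and transitive, so $E_c$ is a poset; for $y\prec x$ it specializes to $(y,g)\prec(x,g\,c(y,x))$, and the incidences $[\,(x,g):(y,g')\,]:=[x:y]$ make $E_c$ the face poset of a regular CW-complex. The group $G$ acts on $E_c$ by $g'\cdot(x,h)=(x,g'h)$, freely, by order automorphisms, and transitively on each fiber. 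One checks $p^{-1}(U_x)=\bigsqcup_{g\in G}U_{(x,g)}$ and that each $p|_{U_{(x,g)}}\colon U_{(x,g)}\to U_x$ is an isomorphism of posets — surjectivity and injectivity of the underlying map because every $y\le x$ has a unique lift below $(x,g)$ (admissibility again), and order-preservation in both directions because $w_c$ is multiplicative along chains — so $p$ is a covering. It is regular with $\Deck(p)=G$: the $G$-action is free and transitive on fibers, while a deck transformation fixing one point is the identity by uniqueness of lifts over the connected space $E_c$; and $E_c$ is connected because $c$ is connected (any two points of a fiber are joined by the lift of a closed edge-path whose weight is the required element of $G$).

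\emph{From a covering to a coloring.} Given a regular covering $p\colon E\to X$ with $\Deck(p)\cong G$, I would fix such an isomorphism and choose, for each $x\in X$, a point $s(x)$ in the fiber $p^{-1}(x)$. For an edge $(y,x)\in\ee(X)$, since $p|_{U_{s(x)}}$ is a homeomorphism there is a unique lift $\tilde y\prec s(x)$ of $y$, and $\tilde y=g\cdot s(y)$ for a unique $g\in G$; set $c(y,x):=g$. The point is that iterating this recipe along a chain $y=x_1\prec\dots\prec x_k=z$ records exactly the unique lift of $y$ lying below $s(z)$, which depends only on $y$, $z$, $s(z)$ and not on the chain; hence the weight of the chain is chain-independent, i.e. $c$ is admissible, and $c$ is connected because $E$ is connected. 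Replacing $s$ by $x\mapsto h_x\cdot s(x)$ changes $c$ to the equivalent coloring $c'(x,y)=h_x c(x,y)h_y^{-1}$, and changing the isomorphism $\Deck(p)\cong G$ by $\varphi\in\mathrm{Aut}(G)$ composes $c$ with $\varphi$, so the equivalence class of $c$ depends only on the equivalence class of $p$.

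\emph{Mutual inverseness and the main obstacle.} Finally I would check that the two assignments compose to the identity on both sides: applying the covering$\to$coloring construction to $E_c$ with $s(x)=(x,e)$ returns $c$ up to equivalence, and applying the coloring$\to$covering construction to the coloring extracted from $p$ returns a covering isomorphic to $p$ over $X$ via $(x,g)\mapsto g\cdot s(x)$; as a cross-check one verifies that under $\hh(X,x_0)\cong\pi_1(X,x_0)$ the homomorphism $W_c$ is precisely the classifying epimorphism of $E_c$, so the bijection agrees with the classical one. I expect the main obstacle to be the bookkeeping in the covering$\to$coloring direction — pinning down left/right and orientation conventions so that ``iterating the edge recipe along a chain'' literally equals $w_c$ of that chain, and simultaneously obtaining the admissibility identity from uniqueness of lifts — together with the verification that $E_c\to X$ is genuinely a covering rather than merely a local homeomorphism; both are elementary but delicate, and the clean route to the latter is the decomposition $p^{-1}(U_x)=\bigsqcup_{g}U_{(x,g)}$ above.
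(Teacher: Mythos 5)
The theorem you are proving is quoted from \cite[Corollary 3.5]{paper1}; the present paper gives no proof of it, only the statement together with the explicit model $E(c)=\{(x,g)\ |\ x\in X,\ g\in G\}$ with $(x,g)\prec (y,gc(x,y))$ for $x\prec y$. Your coloring$\to$covering direction is exactly this construction, and that half of your argument is sound: admissibility makes the relation a partial order, $p^{-1}(U_x)=\bigsqcup_{g}U_{(x,g)}$ with each $U_{(x,g)}\to U_x$ an isomorphism of posets, connectedness of $c$ yields connectedness of $E(c)$, and the free, fiber-transitive $G$-action identifies $\Deck(p)$ with $G$. Your appeal to classical covering theory is also legitimate, since each $U_x$ has a maximum and is therefore contractible, so $X$ is locally contractible and semilocally simply connected.

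The one point that is not mere bookkeeping is in the covering$\to$coloring direction. With your convention ($\tilde y\prec s(x)$ the lift of $y$, $\tilde y=c(y,x)\cdot s(y)$), iterating along an ascending chain $y=x_1\prec\dots\prec x_k=z$ downward from $s(z)$ lands on $c(x_{k-1},x_k)c(x_{k-2},x_{k-1})\cdots c(x_1,x_2)\cdot s(y)$; the chain-independent invariant is therefore the \emph{descending-order} product, not the weight $w_c(\xi)=c(x_1,x_2)\cdots c(x_{k-1},x_k)$ required by the definition of admissibility. For nonabelian $G$ these are different elements, so your deduction ``hence the weight is chain-independent, i.e.\ $c$ is admissible'' does not follow as written; moreover, applied to $E(c)$ with $s(x)=(x,1)$ your recipe returns the edgewise inverse $c(y,x)^{-1}$, which is not in general equivalent to $c$ (inversion is an anti-automorphism, and the equivalence relation only allows automorphisms $\varphi$). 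The fix is to lift \emph{upward}: define $c(y,x)$ by letting $c(y,x)\cdot s(x)$ be the unique lift of $x$ lying above $s(y)$ (unique because $s(y)$ lies in exactly one sheet $U_{\tilde x}$ of $p^{-1}(U_x)$). Then successive lifts along a chain terminate at the unique lift of $z$ above $s(y)$, namely $w_c(\xi)\cdot s(z)$, which gives admissibility in the stated form and makes the two constructions literally inverse. You flagged this as the expected obstacle, and with that correction the remaining verifications (change of section produces $g_xc(x,y)g_y^{-1}$, change of the identification $\Deck(p)\cong G$ produces the automorphism $\varphi$, and $W_c$ is the classifying epimorphism) go through as you describe.
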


Here $\Deck(p)$ denotes the group of deck transformations of $p$. The covering associated to an admissible connected $G$-coloring $c$ is the covering that corresponds to the subgroup $\ker (W_c)$ of $\hh (X,x_0) \cong \pi_1 (X.x_0)$. Theorem 3.6 of  \cite{paper1} tells us explicitely how to construct the covering $E(c)$ corresponding to $c$. It is the poset $E(c)= \{(x,g) \ | \ x\in X, \ g\in G \}$ with the relations $(x,g)\prec (y, gc(x,y))$ whenever $x\prec y$ in $X$. The covering map being the projection onto the first coordinate. The group $G$ acts on $E(c)$ by left multiplication in the second coordinate.

Now, let $K$ be a regular CW-complex and suppose $c$ is any $G$-coloring of $X=\x (K)$ which corresponds to the universal cover, that is, $c$ is an admissible and connected $G$-coloring such that $E=E(c)$ is simply-connected or, equivalently, $W_c:\hh (X,x_0)\to G$ is an isomorphism. The second homotopy group of $K$ is $\pi_2(K)=\pi_2(\x (K))=H_2(E)$. The homology of $E$ can be computed using the chain complex described above. In the case that $K$ is two-dimensional, this computation is easier. In this case $E$ is a poset of height two and $C_3(E)=0$.  A chain $\alpha \in C_2(E)=\Z G \otimes C_2(X)$ is a finite sum of the form $$\alpha = \sum\limits_{\degg(x)=2} \sum\limits_{g \in G} n_g^xgx $$ where $n_g^x\in \Z$. The isomorphism between $C_2(E)$ and $\Z G \otimes C_2(X)$ identifies $(x,1)\in E$ with $x$. Thus, $d:\Z G \otimes C_2(X) \to \Z G \otimes C_2(X)$ maps $x$ to $$\sum\limits_{y\prec x} [x:y](y, c(y,x)^{-1})= \sum\limits_{y\prec x} [x:y]c(y,x)^{-1}y \in C_1(E)=\Z G \otimes C_1(X)$$ and then $d(\alpha)=\sum\limits_{\degg(x)=2} \sum\limits_{g \in G} \sum\limits_{y\prec x} [x:y]n_g^xgc(y,x)^{-1}y.$

Therefore $\pi_2 (K)=\kker (d)$ has the following description

$$\pi_2(K)=\set{ \sum\limits_{\degg(x)=2} \sum\limits_{g \in G} n_g^xgx \ | \ \sum\limits_{x \succ y} [x:y] n_{h \cdot c(y,x)}^x=0 \ \forall \ y\in X \textrm{ with } \degg (y)=1 \textrm{ and } \forall \ h\in G }.$$

On the other hand, Theorem 4.4 and Remark 4.6 in \cite{paper1} provide a concrete way to describe a coloring $\hat{c}$ which corresponds to the universal cover. Let $X$ be a locally finite poset and let $D$ be a subdiagram(=subgraph) of the Hasse diagram of $X$. Suppose that the poset which corresponds to $D$ is simply-connected and that $D$ contains all the points of $X$ (for instance, a spanning tree). Let $G$ be the group generated by the edges $e\in \ee (X)$ which are not in $D$ with the following relations. For each pair of chains $$x=x_1\prec x_2\prec \ldots \prec x_k=y, $$ $$x=x_1'\prec x_2'\prec \ldots \prec x_l'=y $$ with same origin and same end, we put a relation $$\prod\limits_{(x_i,x_{i+1})\notin D} (x_i,x_{i+1}) = \prod\limits_{(x_i',x_{i+1}')\notin D} (x_i',x_{i+1}').$$

According to Theorem 4.4 in \cite{paper1} $G$ is isomorphic to $\pi _1(X)$. Moreover, let $\hat{c}$ be the $G$-coloring defined by $\hat{c}(e)=\overline{e}$, the class of $e$ in $G$, for each $e\in \ee (X)$. If $e\in D$, $\overline{e}=1\in  G$. Then $W_{\hat{c}}:\hh (X,x_0)\to G$ is an isomorphism, so $\hat{c}$ corresponds to the universal cover of $X$. This coloring can be used in the formula above to compute $\pi_2 (K)$.

\begin{ej} \label{}
Consider the regular CW-complex $K$ in Figure \ref{proyect}. It has three $0$-cells, $a,b,c$, six $1$-cells, $q,r,s,t,u,v$ and four $2$-cells, $w,x,y,z$. The Hasse diagram of $\x (K)$ appears in Figure \ref{proyect2}. Let $D$ be the subdiagram of the Hasse diagram given by the solid edges. It is easy to see that the space corresponding to $D$ is simply connected because it is a contractible finite space(=dismantlable poset) \cite[Section 4]{Sto}. The group $G$ generated by the dotted edges $e_1,e_2,e_3,e_4, e_5$ with relations $e_4e_1=1, e_1=e_5, e_2=e_3, e_2=e_5, e_3=e_4$ is then isomorphic to the fundamental group of $K$. Hence $\pi_1(K)=\Z _2$.  

\begin{figure}[h] 
\begin{center}
\includegraphics[scale=0.6]{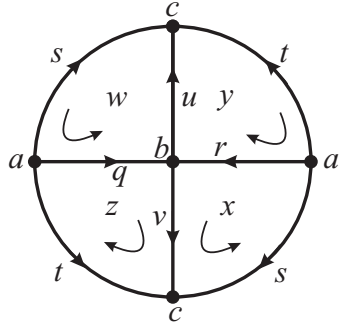}
\caption{A regular CW-structure of the projective plane.}\label{proyect}
\end{center}
\end{figure}

\begin{figure}[h] 
\begin{center}
\includegraphics[scale=0.6]{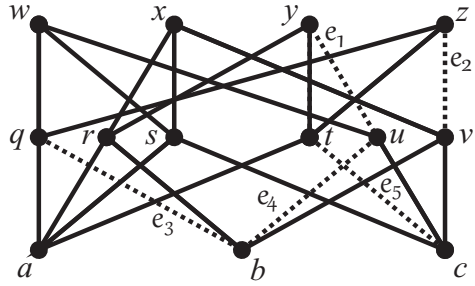}
\caption{The coloring $\hat{c}$ of $\x(K)$.}\label{proyect2}
\end{center}
\end{figure}

For each $h\in G$ and each point of $\x (K)$ of height $1$ we have one equation. These twelve equations describe $\pi _2(K)$. Denote $\gamma$ the generator of $G$. Then $\overline{e}_1=\overline{e}_2=\gamma$. We can choose the incidences $[p_0:p_1]$ according to the orientations of the cells in Figure \ref{proyect}. For instance, the equation corresponding to $u$ and $h\in G$ is $0=[w:u]n_{h \cdot \overline{uw}}^w+[y:u]n_{h\cdot \overline{e}_1}^y=n_h^w+n_{h\gamma}^y$. For each $h\in G$ the equations are

\medskip

\begin{math}
\begin{array}{lrr}
\textrm{Equations for } q:& n_{h}^w+n_h^z & =0\\
\textrm{Equations for } r:& n_{h}^x+n_h^y &=0\\
\textrm{Equations for } s:& -n_h^w-n_h^x &=0\\
\textrm{Equations for } t:& -n_{h}^y-n_h^z &=0\\
\textrm{Equations for } u:& n_{h}^w+n_{h\gamma}^y &=0\\
\textrm{Equations for } v:& n_{h}^x+n_{h\gamma}^z &=0\\
\end{array}
\end{math}

\medskip

Therefore $\pi _2(K)=\{n(w-\gamma w-x+\gamma x+y-\gamma y-z+\gamma z) \ | \ n\in \Z \}$ is isomorphic to $\Z$. In fact $K$ is just the real projective plane, so the results are not surprising. However, the example shows how to carry on the computation of $\pi _2$ for arbitrary regular CW-complexes.
\end{ej}

\begin{teo} \label{cororegular}
Let $K$ be a connected regular CW-complex of dimension $2$ and let $K'$ be its barycentric subdivision. Consider the full (one-dimensional) subcomplex $L$ of $K'$ spanned by the barycenters of the $1$-cells and $2$-cells. If the inclusion of each component of $L$ in $K'$ induces the trivial morphism between the fundamental groups, then $\pi_2 (K)=\Z [\pi_1 (K)]\otimes H_2(K)$.  
\end{teo}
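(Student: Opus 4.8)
The plan is to apply the explicit description of $\pi_2(K)$ displayed above to a coloring of $X:=\x(K)$ that corresponds to the universal cover and is trivial on every edge of the Hasse diagram joining a $1$-cell to a $2$-cell. Set $Y=\set{x\in X\ |\ \degg(x)\ge 1}$, the subposet of positive-dimensional cells. Under the identification of $K'$ with the classifying space $\kp(X)$, the subcomplex $L$ is exactly the full subcomplex $\kp(Y)$; its connected components are the classifying spaces of the connected components $M_1,M_2,\dots$ of the poset $Y$, and via the natural weak equivalences $\kp(Y)\to Y$, $\kp(X)\to X$ and $\mu\colon K\to X$ the hypothesis translates into: the inclusion $M_i\hookrightarrow X$ induces the trivial morphism $\pi_1(M_i)\to\pi_1(X)$ for every $i$.

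I would then build a convenient spanning subdiagram $D$ of the Hasse diagram of $X$. Choose for each $i$ a spanning tree $T_i$ of the (connected) Hasse diagram of $M_i$; these are pairwise disjoint and cover every point of $X$ of height $\ge 1$. Since $X$ is connected, the graph obtained from the Hasse diagram of $X$ by collapsing each $M_i$ to a point is connected, and every edge of the Hasse diagram not lying inside some $M_i$ joins a $0$-cell to a $1$-cell; pulling back a spanning tree of this collapsed graph therefore yields a set of such edges which, together with the $T_i$, forms a spanning tree $D$ of the whole Hasse diagram of $X$. As $D$ is a spanning tree it is simply connected, so Theorem 4.4 and Remark 4.6 of \cite{paper1} apply: the group $G$ generated by the edges outside $D$ (with the chain relations) is isomorphic to $\pi_1(X)\cong\pi_1(K)$, and the coloring $\hat c$ with $\hat c(e)=\overline e$ corresponds to the universal cover of $X$.

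The key point is that $\hat c$ vanishes on every edge of $Y$. Let $e=(y,x)\in\ee(X)$ with $\degg(y)=1$, $\degg(x)=2$; then $y$ and $x$ lie in a common component $M_i$. If $e\in D$ then $\overline e=1$ by construction. If $e\notin D$, concatenate $e$ with the $T_i$-path from $x$ back to $y$ to obtain a closed edge-path $\gamma$ lying entirely in $M_i$; since the edges of $T_i$ have color $1$, the weight of $\gamma$ equals $\overline e$. On the other hand $\gamma$ represents a class in $\hh(M_i,y)\cong\pi_1(M_i,y)$ whose image in $\hh(X,y)\cong\pi_1(X,y)$ is trivial by the translated hypothesis, and the weight homomorphism induced by $\hat c$ is an isomorphism onto $G$ (so it sends trivial classes to $1$); hence $\overline e=1$. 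Thus $\hat c(y,x)=1$ whenever $\degg(y)=1$ and $y\prec x$.

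Finally I would substitute $\hat c$ into the description of $\pi_2(K)$. The defining conditions become $\sum_{x\succ y}[x:y]\,n_h^x=0$ for every $h\in G$ and every $y$ with $\degg(y)=1$; that is, for each fixed $h$ the chain $\sum_{\degg(x)=2}n_h^x x$ lies in $\kker\bigl(d\colon C_2(X)\to C_1(X)\bigr)$. Since $\dim K=2$ we have $C_3(X)=0$, so this kernel is $H_2(X)=H_2(K)$. Hence an element of $\pi_2(K)$ is precisely a finitely supported family $(c_h)_{h\in G}$ with each $c_h\in H_2(K)$, which gives $\pi_2(K)\cong\bigoplus_{h\in G}H_2(K)=\Z G\otimes H_2(K)=\Z[\pi_1(K)]\otimes H_2(K)$; one checks that this identification is compatible with the natural $\Z[\pi_1(K)]$-module structures, the action on $H_2(K)$ being trivial. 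The main obstacle is the middle step: arranging a spanning tree $D$ that simultaneously restricts to a spanning tree of each component of $Y$, and then carefully matching the color $\overline e$ of an edge with the class of an explicit loop in the correct component of $L$; once $\hat c$ is known to be trivial on $Y$, the conclusion is a direct substitution into the already-established formula for $\pi_2$.
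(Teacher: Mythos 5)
Your proof is correct and follows essentially the same route as the paper: both arguments hinge on replacing a universal-cover coloring by an equivalent one that is trivial on every edge joining a $1$-cell to a $2$-cell, after which the boundary map of the universal cover becomes $1\otimes\delta$ and the conclusion follows. The only difference is that the paper obtains such a coloring by citing Remark 4.2 of \cite{paper1}, whereas you construct it explicitly via a spanning tree adapted to the components of $Y$ and verify the triviality by hand; likewise your direct computation with the kernel equations is just an unpacked version of the paper's K\"unneth step.
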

\begin{proof}
Let $D$ be the subgraph of the Hasse diagram of $\x (K)$ induced by the points of height $1$ and $2$. Then $L$ is the classifying space $\kp (D)$ of the poset associated to $D$ and the weak homotopy equivalence $\mu :K' \to \x (K)$ restricts to a weak equivalence $L\to D$. Moreover, for each component $L_i$ of $L$, $\mu | _{L_i}$ is a weak equivalence between $L_i$ and a component $D_i$ of $D$. If $L_i \hookrightarrow K'$ induces the trivial map in $\pi_1$, then so does the inclusion $D_i\hookrightarrow \x (K)$. By \cite[Remark 4.2]{paper1} each admissible $G$-coloring of $\x (K)$ is equivalent to another which is trivial in the edges of $D$. In particular, if $c$ is a $\pi _1 (K)$-coloring which corresponds to the universal cover, then it is equivalent to a coloring $c'$ such that $c'(x,y)=1$ for each $(x,y)\in D$. Then $\widetilde{X}=E(c')$ is also the universal cover of $\x (K)$ and 
$$d=1\otimes \delta:C_2(\widetilde{X})=\Z [\pi_1 (K)]\otimes C_2(\x (K))\to C_{1}(\widetilde{X})=\Z [\pi_1(K)] \otimes C_{1}(\x (K)),$$
where $\delta:C_2(\x (K))\to C_{1}(\x (K))$ is the boundary map of the chain complex associated to $\x (K)$. Since $\Z [\pi _1(K)]$ is a free $\Z$-module, $\pi_2 (K)=H_2(\widetilde{X})=\Z [\pi_1 (K)] \otimes H_2(K)$ by the K\"unneth formula. 
\end{proof}

When $K$ is simply-connected, the previous result reduces to the Hurewicz Theorem for dimension $2$.

Theorem \ref{cororegular} can be restated as follows: If every closed edge-path of $K'$ containing no vertex of $K$ is equivalent to the trivial edge-path, then $\pi_2 (K)=\Z [\pi_1 (K)]\otimes H_2(K)$.

There is an obvious generalization of Theorem \ref{cororegular} to connected regular CW-complexes with no restriction on the dimension.

\begin{coro}\label{anydimensionalhurew}
Let $K$ be a connected regular CW-complex. If every closed edge-path of $K'$ containing only vertices which are barycenters of $1$, $2$ or $3$-dimensional simplices is equivalent to the trivial edge-path, then $\pi_2 (K)=\Z [\pi_1 (K)]\otimes H_2(K)$.
\end{coro}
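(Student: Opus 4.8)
The plan is to run the proof of Theorem \ref{cororegular} once more, only enlarging the relevant part of the Hasse diagram of $\x(K)$ from the points of height $1$ and $2$ to the points of height $1$, $2$ and $3$, and then to observe that this larger subgraph is exactly the one whose triviality under the coloring is needed for \emph{all} the differentials entering the computation of $H_2$ to become the boundary maps of $\x(K)$ tensored with $1$.

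First I would let $D$ be the subgraph of the Hasse diagram of $\x(K)$ induced by the points of height $1$, $2$ and $3$. The classifying space $\kp(D)$ of the poset associated to $D$ is the full subcomplex $L$ of $K'$ spanned by the barycenters of the $1$-, $2$- and $3$-cells of $K$, and the weak equivalence $\mu\colon K'\to\x(K)$ restricts to a weak equivalence $L\to D$ carrying each component $L_i$ of $L$ to a component $D_i$ of $D$; hence the hypothesis of the corollary says precisely that each inclusion $D_i\hookrightarrow\x(K)$ induces the trivial map on fundamental groups (this is the ``closed edge-path'' reformulation read through the isomorphism $\hh\cong\pi_1$). Exactly as in Theorem \ref{cororegular}, \cite[Remark 4.2]{paper1} then lets me replace a $\pi_1(K)$-coloring $c$ of $\x(K)$ corresponding to the universal cover by an equivalent coloring $c'$ with $c'(x,y)=1$ for every $(x,y)\in D$, so that $\widetilde X=E(c')$ is again the universal cover of $\x(K)$ and $\pi_2(K)=\pi_2(\x(K))=H_2(\widetilde X)$ by covering space theory and the Hurewicz theorem.

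The new ingredient is only the bookkeeping of which colors reach the low-degree differentials. In $E(c')$ the differential carries a basic generator $x$ to $\sum_{y\prec x}[x:y]\,c'(y,x)^{-1}y$, so $d\colon C_2(\widetilde X)\to C_1(\widetilde X)$ involves only colors of edges between heights $2$ and $1$, and $d\colon C_3(\widetilde X)\to C_2(\widetilde X)$ only colors of edges between heights $3$ and $2$; since $y\prec x$ forces $\degg(x)=\degg(y)+1$ in the face poset of a regular CW-complex, all of these edges lie in $D$, where $c'$ is trivial. Thus both differentials have the form $1\otimes\delta$, with $\delta$ the corresponding boundary map of the chain complex of $\x(K)$. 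As $H_2(\widetilde X)$ is computed from the two consecutive maps $C_3(\widetilde X)\to C_2(\widetilde X)\to C_1(\widetilde X)$ alone, it coincides with $H_2\big(\Z[\pi_1(K)]\otimes C_\bullet(\x(K))\big)$, which equals $\Z[\pi_1(K)]\otimes H_2(\x(K))=\Z[\pi_1(K)]\otimes H_2(K)$ by the K\"unneth formula, because $\Z[\pi_1(K)]$ is a free $\Z$-module.

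I do not expect a genuine obstacle; this is a refinement of Theorem \ref{cororegular} rather than a new argument. The two points needing a little care are that \cite[Remark 4.2]{paper1} is being applied to the possibly disconnected subgraph $D$, using the $\pi_1$-triviality of each of its components just as in the two-dimensional case, and that the higher chain groups $C_n(\widetilde X)$ with $n\ge 4$ --- on which $c'$ need not be trivial --- play no role, since $H_2(\widetilde X)$ depends only on $C_3(\widetilde X)\to C_2(\widetilde X)\to C_1(\widetilde X)$.
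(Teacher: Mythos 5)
Your proof is correct and is exactly the intended argument: the paper offers no proof of this corollary (it is stated as an ``obvious generalization'' of Theorem \ref{cororegular}), and your write-up supplies the expected details --- take $D$ to be the subdiagram induced by the points of height $1$, $2$ and $3$, trivialize the coloring there via \cite[Remark 4.2]{paper1}, and observe that $H_2(\widetilde X)$ depends only on $C_3\to C_2\to C_1$, whose differentials then become $1\otimes\delta$. No issues.
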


%Note that the coloring in Example \ref{ejsss} is equivalent to a coloring with trivial colors in all the edges between points of height $1$ and $2$ (changing the colors of edges adjacent to $x$ and $v$). Therefore one can easily deduce that $\pi _2(X)$ is a free $\Z [\pi _1(X)]$-module without needing to compute $\pi_2(X)$ completely.

%Recall from \cite{} that a CW-complex $K$ is called $h$-regular if all its closed cells are contractible subcomplexes. Every regular CW-complex is $h$-regular.

The following is another application of our methods (compare with \cite{Whg}).

\begin{teo}
Let $X$ and $Y$ be two connected $CW$-complexes. If $Y$ is simply-connected, then $\pi _2(X\vee Y)=\pi_2 (X)\oplus (\Z [\pi_1(X)]\otimes \pi_2 (Y))$.
\end{teo}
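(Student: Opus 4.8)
The plan is to realise the universal cover of $X\vee Y$ by a $\pi_1$-coloring and then to observe that, in the range of degrees relevant for $H_2$, the associated twisted chain complex splits as a direct sum of the chain complex of the universal cover of $X$ and a $\Z[\pi_1(X)]$-fold copy of the chain complex of $Y$. First I would reduce to the case in which $X$ and $Y$ are regular CW-complexes whose basepoints are $0$-cells: every connected CW-complex is homotopy equivalent to a simplicial complex, simplicial complexes are regular, and a homotopy equivalence between CW-complexes may be taken to carry a chosen vertex to a chosen vertex, so $X\simeq X'$ and $Y\simeq Y'$ rel basepoints give $X\vee Y\simeq X'\vee Y'$, which is again simplicial; since $\pi_1$, $\pi_2$ and $\Z[\pi_1]\otimes\pi_2$ are homotopy invariant, nothing is lost. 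Assume then $X$, $Y$ regular, with the wedge point $x_0=y_0$ a $0$-cell, put $K=X\vee Y$ and $G=\pi_1(K)$. By van Kampen the inclusion $\x(X)\hookrightarrow\x(K)$ induces an isomorphism $\pi_1(X)\xrightarrow{\cong}G$ and $\pi_1(Y)=1$, so $\Z G=\Z[\pi_1(X)]$. Moreover $\x(K)=\x(X)\cup\x(Y)$ with $\x(X)\cap\x(Y)=\{x_0\}$, and the key structural point is that \emph{every chain of $\x(K)$ lies entirely in $\x(X)$ or entirely in $\x(Y)$}, since a cell of $X\setminus\{x_0\}$ and a cell of $Y\setminus\{y_0\}$ are incomparable in the face order. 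Note also that each $U_z$ is finite, so the posets involved are locally finite and the coloring machinery applies.

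Next I would fix a $G$-coloring $c$ of $\x(K)$ corresponding to the universal cover $E$, so that $W_c$ is an isomorphism and $\pi_2(K)=H_2(E)$. Since $\x(Y)\hookrightarrow\x(K)$ induces the trivial map on $\pi_1$, \cite[Remark 4.2]{paper1} allows me to replace $c$ by an equivalent coloring (still corresponding to the universal cover) with $c(z,z')=1$ for every edge $(z,z')$ of $\x(Y)$. Then $c_X:=c|_{\x(X)}$ is an admissible connected $G$-coloring, and $W_{c_X}$ is an isomorphism because $\x(X)\hookrightarrow\x(K)$ induces an isomorphism on $\pi_1$; hence $E(c_X)$ is the universal cover $\widetilde X$ of $\x(X)$.

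Now I would compute $H_2(E)$ from the twisted complex $C_\ast(E)=\Z G\otimes C_\ast(\x(K))$ described in the excerpt. For $n\ge1$ the wedge identifies no cells beyond the basepoint, so $C_n(\x(K))=C_n(\x(X))\oplus C_n(\x(Y))$; and since every cell of $X$ (resp.\ of $Y$) of dimension $\ge2$ has all of its codimension-one faces in $X$ (resp.\ in $Y$), while $c$ is block diagonal ($c_X$ on $\x(X)$-edges, trivial on $\x(Y)$-edges), the differentials $C_3(E)\to C_2(E)\to C_1(E)$ respect this decomposition. Thus, in degrees $1,2,3$,
$$C_\ast(E)\ \cong\ \bigl(\Z G\otimes C_\ast(\x(X)),\ d_{c_X}\bigr)\ \oplus\ \bigl(\Z G\otimes C_\ast(\x(Y)),\ 1\otimes\delta_Y\bigr),$$
where $d_{c_X}$ is the differential of the chain complex of $\widetilde X$ and $\delta_Y$ the ordinary boundary of $C_\ast(\x(Y))$. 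Taking $H_2$ and using that $\Z G$ is $\Z$-free, so that $\Z G\otimes-$ commutes with homology, yields $H_2(E)=H_2(\widetilde X)\oplus\bigl(\Z G\otimes H_2(\x(Y))\bigr)$.

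Finally, the Hurewicz theorem closes the argument: $\widetilde X$ is simply connected, hence $H_2(\widetilde X)=\pi_2(\widetilde X)=\pi_2(X)$; and $Y$ is simply connected, hence $H_2(\x(Y))=H_2(Y)=\pi_2(Y)$. Therefore $\pi_2(X\vee Y)=\pi_2(X)\oplus\bigl(\Z[\pi_1(X)]\otimes\pi_2(Y)\bigr)$. The step I expect to require the most care is producing a universal-cover coloring that is trivial on all of $\x(Y)$: this is what simultaneously makes the differential block diagonal and identifies $E(c_X)$ with the universal cover of $\x(X)$, and after it the remainder is just the splitting of the complex together with routine K\"unneth/Hurewicz bookkeeping.
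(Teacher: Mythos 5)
Your proposal is correct and follows essentially the same route as the paper: reduce to face posets of regular complexes, replace the universal-cover coloring by an equivalent one that is trivial on $\x(Y)$ via Remark 4.2 of \cite{paper1}, observe that the restriction to $\x(X)$ still corresponds to the universal cover of $X$, split the twisted chain complex accordingly, and finish with K\"unneth/Hurewicz. Your extra care in checking that the splitting also holds in degree $3$ (so that boundaries from $C_3(E)$ respect the decomposition) is a small but welcome refinement of the paper's argument, which only records the splitting for $n=1,2$.
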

\begin{proof}
Since each $CW$-complex is homotopy equivalent to a simplicial complex, it suffices to prove the result for face posets $X$ and $Y$ of regular CW-complexes. Here, $X\vee Y$ denotes the space whose Hasse diagram is obtained from the diagrams of $X$ and of $Y$ by identifying a minimal element of each.
Let $c$ be a coloring of $X\vee Y$ corresponding to the universal cover. Then $c$ is a $G$-coloring with $G\simeq \pi_1 (X\vee Y)\simeq \pi_1 (X)$. Since $Y$ is simply-connected, there is an equivalent $G$-coloring $c'$ which is trivial in $Y$ (once again by Lemma 4.1 or Remark 4.2 in \cite{paper1}). The restriction of $c'$ to $X$ is an admissible connected $G$-coloring. Moreover, if a closed edge-path in $X$ is in $\kker (W_{c'|_X})$, then it is in $\kker (W_{c'})=0$. Thus, it is trivial in $\hh (X\vee Y)$ and then in $\hh (X)$, since the inclusion $X\hookrightarrow X\vee Y$ induces an isomorphism between the fundamental groups. Therefore, $c'|_X$ corresponds to the universal cover of $X$.

Let $\widetilde{X\vee Y}=E(c')$ be the universal cover of $X\vee Y$. Note that $$C_n(\widetilde{X\vee Y})=\Z [G] \otimes C_n(X\vee Y)= (\Z [G] \otimes C_n(X)) \oplus (\Z [G] \otimes C_n(Y))$$ for $n=1,2$. Since $c'|_X$ corresponds to the universal cover of $X$ and $c'|_Y$ is trivial, the differential $$d: (\Z [G] \otimes C_2(X)) \oplus (\Z [G] \otimes C_2(Y)) \to (\Z [G] \otimes C_1(X)) \oplus (\Z [G] \otimes C_1(Y))$$ has the form $d=d_{\widetilde{X}} \oplus (1_{\Z [G]} \otimes d_Y)$, where $d_{\widetilde{X}}:C_2(\widetilde{X}) \to C_1(\widetilde {X})$ is the differential in the chain complex associated to the universal cover of $X$ and $d_Y:C_2(Y) \to C_1(Y)$ is the differential in the complex associated to $Y$. By the K\"unneth formula, $\pi_2 (X\vee Y)=\kker (d) =H_2 (\widetilde{X}) \oplus (\Z [G] \otimes H_2(Y))=\pi _2(X)\oplus (\Z [G] \otimes \pi _2(Y))$.   
\end{proof}

\section{Results on asphericity}

We use the methods developed above to study asphericity of two-dimensional complexes and group presentations.

\begin{teo} \label{main5}
Let $K$ be a $2$-dimensional regular CW-complex and let $K'$ be its barycentric subdivision. Consider the full (one-dimensional) subcomplex $L\subseteq K'$ spanned by the barycenters $b(\tau)$ of the $2$-cells $\tau$ of $K$ and the barycenters of the $1$-cells which are faces of exactly two $2$-cells. Suppose that for every connected component $M$ of $L$, $i_*(\pi_1 (M))\leq \pi_1 (K')$ contains an element of infinite order, where $i_*:\pi_1 (M) \to \pi_1 (K')$ is the map
induced by the inclusion. Then $K$ is aspherical.  
\end{teo}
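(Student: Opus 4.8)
The plan is to use the explicit description of $\pi_2(K)$ as the kernel of the differential $d\colon \Z[G]\otimes C_2(\x(K))\to \Z[G]\otimes C_1(\x(K))$ derived above, where $G=\pi_1(K)$ and the coloring $\hat c$ corresponding to the universal cover is chosen so as to be trivial on a convenient subdiagram. The key observation is that the subcomplex $L$ in the statement corresponds, under the weak equivalence $\mu\colon K'\to\x(K)$, to the subdiagram $D$ of the Hasse diagram of $\x(K)$ spanned by the points of height $2$ together with the points of height $1$ that are covered by exactly two points of height $2$ (i.e.\ those $1$-cells that are faces of exactly two $2$-cells). So I would first record this identification: each component $M$ of $L$ is weakly equivalent to a component $D_i$ of $D$, the map $i_*\colon\pi_1(M)\to\pi_1(K')$ corresponds to $\pi_1(D_i)\to\pi_1(\x(K))$, and the hypothesis says each $i_*(\pi_1(D_i))$ contains an element of infinite order.

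Next I would analyze the differential in this basis. A general element $\alpha=\sum_{\degg(x)=2}\sum_{g\in G} n_g^x\, gx$ of $\ker d$ must satisfy, for each height-$1$ point $y$ and each $h\in G$, the equation $\sum_{x\succ y}[x{:}y]\,n^x_{h\cdot c(y,x)}=0$ from the boxed formula. For a height-$1$ point $y$ that is a face of exactly one $2$-cell $x$ this forces $n^x_{h}=0$ for all $h$, so every $2$-cell lying in a "boundary" $1$-cell of this type must have all coefficients zero; iterating, only $2$-cells whose every $1$-face is shared by two $2$-cells can carry nonzero coefficients, and the support of $\alpha$ is therefore confined to (a union of) the components $D_i$ of $D$. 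The remaining equations, restricted to a single component $D_i$, say that on $D_i$ the chain $\alpha|_{D_i}$ is a cycle in the chain complex of the restricted covering $\widetilde{D_i}$, i.e.\ $\alpha|_{D_i}\in H_2(\widetilde{D_i})$ where $\widetilde{D_i}\to D_i$ is the covering classified by $\ker(W_{\hat c}|_{\hh(D_i)})$.

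Now comes the heart of the argument: I claim $H_2(\widetilde{D_i})=0$ for each $i$. Since $D_i$ is a poset of height $2$ of a very special shape — every height-$1$ point is covered by exactly two height-$2$ points — its classifying space $M$ (a component of $L$) is a graph-like $2$-complex; more precisely $\kp(D_i)$ collapses onto a $2$-complex where each "edge" (barycenter of a $1$-cell) lies on exactly two "faces", so $M$ has the structure making $H_2$ of any cover governed by how the monodromy acts. The covering $\widetilde{D_i}$ is the one whose deck group is $i_*(\pi_1(M))$ (the image, since $\widetilde{D_i}$ is the pullback of the universal cover of $\x(K)$). Because $i_*(\pi_1(M))$ contains an element of infinite order, $\widetilde{D_i}$ is an infinite-sheeted cover of the compact $2$-complex $M$; a noncompact $2$-complex has vanishing $H_2$ (its chain complex $C_2\to C_1$ has trivial kernel since any $2$-cycle would have to be supported on infinitely many cells yet each $1$-cell bounds only finitely many $2$-cells forcing an infinite descent — equivalently, a noncompact surface-like complex is homotopy equivalent to a $1$-complex). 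Hence $H_2(\widetilde{D_i})=0$, so $\alpha|_{D_i}=0$ for every $i$, whence $\alpha=0$ and $\pi_2(K)=\ker d=0$.

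The main obstacle is making rigorous the step $H_2(\widetilde{D_i})=0$: one must argue carefully that because every height-$1$ point of $D_i$ is covered by exactly two height-$2$ points, the boundary map $C_2(\widetilde{D_i})\to C_1(\widetilde{D_i})$ is injective whenever the cover is infinite. I would handle this combinatorially: a nonzero element of the kernel would have finite support, choose a height-$1$ point $\tilde y$ in the "boundary" of its support; exactly two height-$2$ points cover $\tilde y$ and the corresponding equation forces their coefficients to be related by $\pm1$, and following this relation around the component one concludes the support must be all of an infinite component, contradicting finiteness. Equivalently, and perhaps more cleanly, invoke that an infinite connected $2$-complex in which every $1$-cell is a face of exactly two $2$-cells is homotopy equivalent to a wedge of circles (it is an open or non-compact "pseudo-surface"), hence has $H_2=0$; I would cite or prove this lemma separately and then apply it to $\widetilde{D_i}$, using the infinite-order element to guarantee non-compactness.
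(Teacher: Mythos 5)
Your proposal is correct and follows essentially the same route as the paper: restrict attention to the subposet of $2$-cells together with the $1$-cells having exactly two cofaces, note that each equation indexed by such a $1$-cell has exactly two terms so a nonzero coefficient propagates through the entire (infinite, by the infinite-order hypothesis) component of the preimage in the universal cover, contradicting the finiteness of the support of a chain --- the paper merely packages this propagation by running it around one explicit closed edge-path whose weight has infinite order. Two blemishes are worth fixing but do not affect correctness: the intermediate claim that only $2$-cells all of whose $1$-faces are doubly shared can carry nonzero coefficients is false (a $1$-face shared by three $2$-cells yields a three-term equation that kills nothing) yet also unnecessary, since every $2$-cell already lies in some component $D_i$; and the appeal to ``$H_2$ of a noncompact $2$-complex'' is misleading ($\kp(D_i)$ is a graph, and a connected noncompact $2$-complex can have nonzero $H_2$), so the combinatorial propagation argument you give at the end is the one that should carry the proof.
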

\begin{proof}
Let $c$ be a $G$-coloring of $\x (K)$ which corresponds to the universal cover. We will use the equations describing $\pi_2 (K)$ to show that if $\alpha =\sum\limits_{\degg(x)=2} \sum\limits_{g \in G} n_g^xgx \in \pi _2(K)$, then $n_g^x=0$ for every $g\in G$ and every $x$ with $\degg (x)=2$. Let $x=\tau$ be a maximal element of $\x (K)$. Then $W=W_{c}:\hh (\x (K), x) \to G$ is an isomorphism. 

Let $Y$ be the subspace of $\x (K)$ consisting of the $2$-cells and the $1$-cells which are faces of exactly two $2$-cells. Note that $L=\kp (Y)$, so there is a weak homotopy equivalence $L\to Y$. Since $i_*(\pi_1 (L ,b(\tau)))$ contains an element of infinite order and $W$ is an isomorphism, there is a closed edge-path $\xi$ at $x$ in $Y$ of weight
$w(\xi)\in G$ of infinite order. We may assume that $\xi$ is an edge-path of minimum length satisfying this property. Suppose $\xi$ is the edge-path $x=x_0 \succ w_0 \prec x_1 \succ w_1 \prec \ldots \succ w_{k-1} \prec x_{k}=x$. 
By the minimality of $\xi$, $x_{i+1}\neq x_i$ for every $0\leq i< k$. Since $x_i$ and $x_{i+1}$ are the unique two elements covering $w_i$, the equation corresponding to $w_i$ and an element $g\in G$ is 
$$[x_i: w_i] n_{gc(w_i, x_i)}^{x_i}+[x_{i+1}:w_i]n_{gc(w_i, x_{i+1})}^{x_{i+1}}=0.$$
In particular, given $g\in G$, if $n_g^{x_i}\neq 0$, then $n_{gc(w_i,x_i)^{-1}c(w_i,x_{i+1})}^{x_{i+1}}\neq 0$.

Let $h\in G$. Suppose that $n_h^x\neq 0$. Applying the previous assertion $k$ times we obtain that $n_{hw(\xi)}^x\neq 0$. Repeating this reasoning we deduce that $n_{hw(\xi)^l}^x\neq 0$ for every $l\ge 0$. However, $w(\xi)\in G$ has infinite order and this contradicts the fact that only finitely many $n_g^z$ can be non-zero.
\end{proof}

Note that from the previous result one deduces the well-known fact that all compact surfaces different from $S^2$ and $\mathbb{R}P^2$ are aspherical. Any triangulation $K$ of such surfaces satisfies the hypotheses of the theorem since every edge of $K$ is face of exactly two $2$-simplices and the links of the vertices are connected.

%In a triangulation $K$ of one such a surface any closed edge-path in $K'$ at the barycenter
% of a $2$-simplex is equivalent to an edge-path avoiding all the barycenters of vertices of $K$. This follows from the fact that the links of the vertices of $K$ are connected. Since $\pi_1 (K)$ has elements of infinite order and $i_*$ is
% an epimorphism, the theorem applies.
 
\begin{ej}
The pinched two-handled torus and the wedge of two torii (Figure \ref{pinch}) are aspherical by Theorem \ref{main5}.
\begin{figure}[h] 
\begin{minipage}{6cm}
\vspace{.3cm}
\center{\includegraphics[scale=0.3]{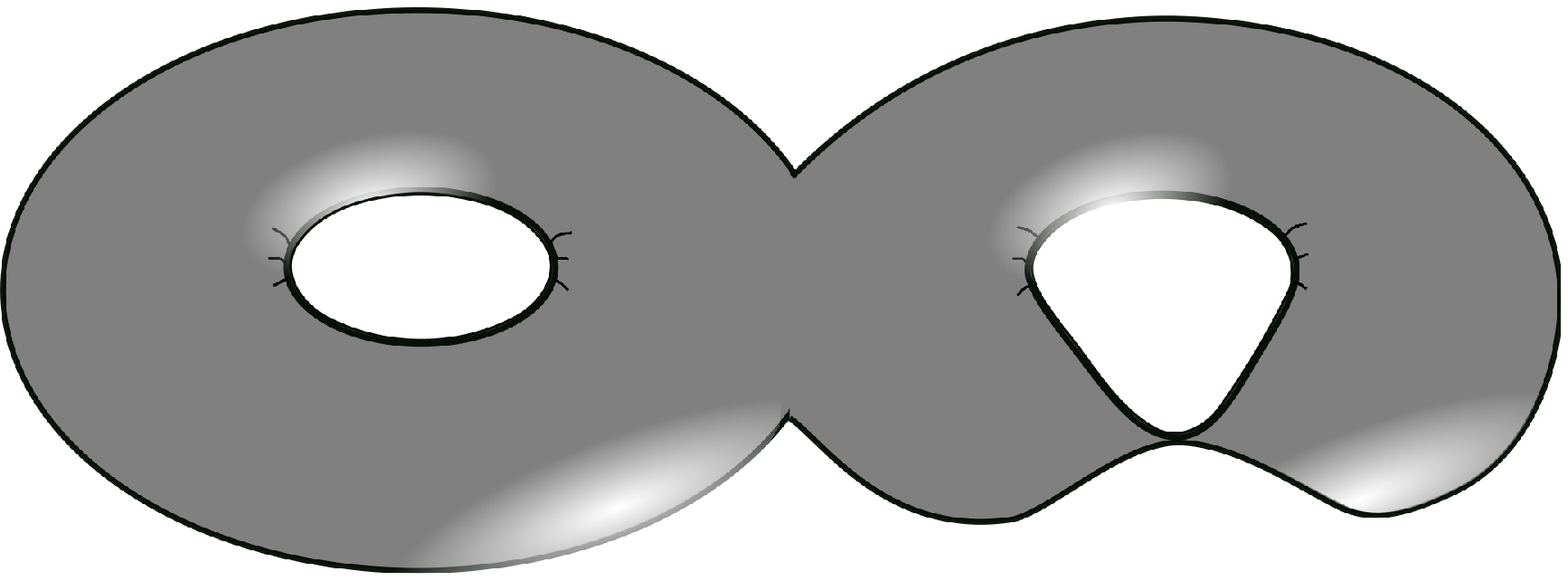}}
\end{minipage}
\begin{minipage}{6cm}
\center{\includegraphics[scale=0.3]{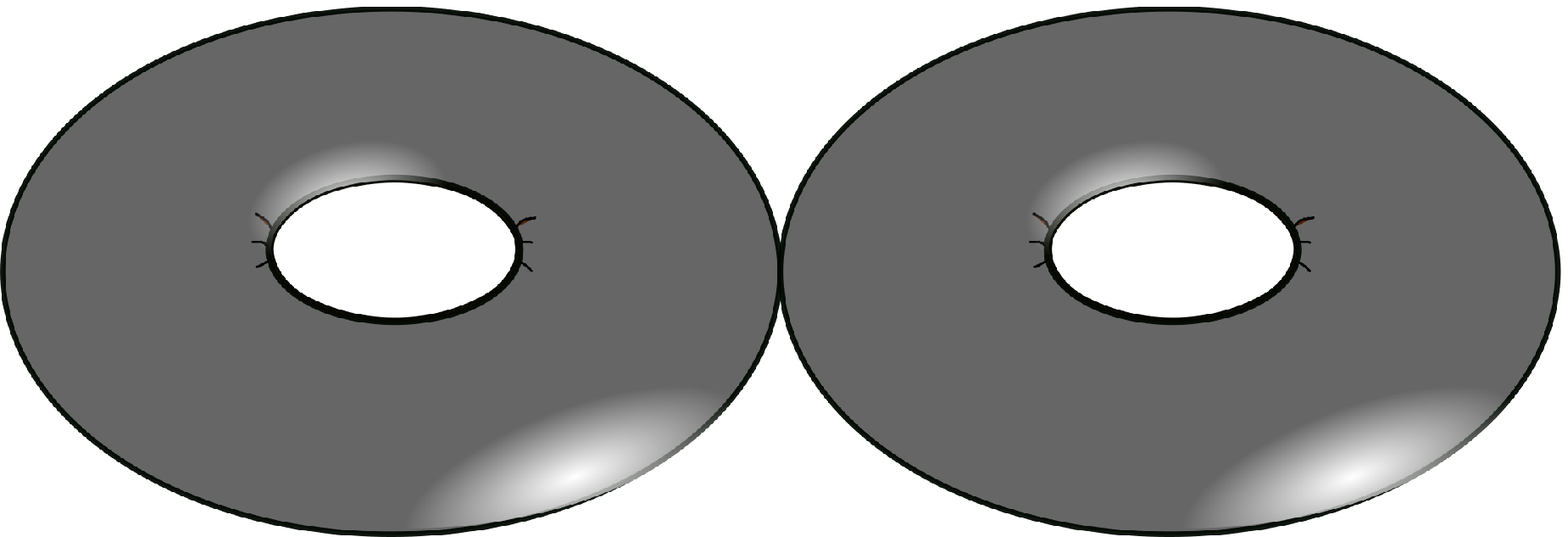}}
\end{minipage}
\begin{center}
\caption{Aspherical two-complexes.}\label{pinch}
\end{center}
\end{figure}
\end{ej}

\begin{obs}
It is well-known that the fundamental group of any $2$-dimensional aspherical complex is torsion-free (see  \cite[Proposition 2.45]{Hat}).  Theorem \ref{main5} says that if the $2$-complex $K$ has a torsion-free fundamental group and the maps  $i_*:\pi_1 (M)\to \pi_1 (K')$ are non-trivial then $K$ is aspherical.  
\end{obs}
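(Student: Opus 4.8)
The plan is to exploit the explicit description of $\pi_2(K)$ as the kernel of the differential on $C_2$ of the universal cover,
$$\pi_2(K)=\set{ \sum\limits_{\degg(x)=2} \sum\limits_{g \in G} n_g^xgx \ | \ \sum\limits_{x \succ y} [x:y] n_{h \cdot c(y,x)}^x=0 \ \forall \ y \textrm{ with } \degg (y)=1, \ \forall \ h\in G },$$
where $X=\x (K)$, $G=\pi_1(K)$, and $c$ is an admissible connected $G$-coloring of $X$ with $W_c:\hh (X,x_0)\to G$ an isomorphism. I want to show this kernel is trivial: if $\alpha=\sum n_g^x gx\in\pi_2(K)$, then $n_g^x=0$ for every $g\in G$ and every maximal element $x=\tau$ of $X$ (i.e. every $2$-cell of $K$), whence $\alpha=0$.

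First I would translate the hypothesis on $L$ into a combinatorial statement about $X$. Let $Y\subseteq X$ be the subposet consisting of the $2$-cells of $K$ together with the $1$-cells that are faces of exactly two $2$-cells, so that $L=\kp (Y)$ and McCord's weak equivalence $\kp(X)\to X$ restricts to a weak equivalence $L\to Y$ carrying components to components. Combining this with $\pi_1(K')=\pi_1(\kp(X))\cong\pi_1(X)$ and with the isomorphism $W_c$, the hypothesis becomes: for each component $Y_i$ of $Y$, the image of $\hh(Y_i)\to\hh(X)\xrightarrow{W_c}G$ contains an element of infinite order. Since the barycenter of every $2$-cell lies in $L$, each maximal $\tau\in X$ lies in some component $Y_i$; using that $Y_i$ is connected and that conjugation preserves the order of the weight of a closed edge-path, I obtain a closed edge-path $\xi$ at $\tau$ inside $Y_i$ with $w_c(\xi)\in G$ of infinite order, which I may take to be of minimal length with this property.

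The heart of the argument is a propagation along $\xi$. Writing $\xi$ as $\tau=x_0\succ w_0\prec x_1\succ w_1\prec\cdots\prec x_k=\tau$, minimality forces $x_{i+1}\neq x_i$, since otherwise the subpath $(x_i,w_i)(w_i,x_i)$ is trivial in $\hh(X)$ and may be deleted, giving a shorter closed edge-path in $Y_i$ with the same class and hence the same weight. Because $w_i$ is a face of exactly two $2$-cells, those are precisely $x_i$ and $x_{i+1}$, so the defining equation of $\pi_2(K)$ at the height-$1$ point $w_i$ is
$$[x_i:w_i]\,n_{h\,c(w_i,x_i)}^{x_i}+[x_{i+1}:w_i]\,n_{h\,c(w_i,x_{i+1})}^{x_{i+1}}=0\qquad\textrm{for all }h\in G.$$
As $[x_i:w_i],[x_{i+1}:w_i]\in\{1,-1\}$, this yields: if $n_g^{x_i}\neq0$ then $n^{x_{i+1}}_{g\,c(x_i,w_i)c(w_i,x_{i+1})}\neq0$. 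Chaining these implications once around $\xi$ gives $n_h^\tau\neq0\Rightarrow n_{h\,w_c(\xi)}^\tau\neq0$, and iterating, $n_{h\,w_c(\xi)^l}^\tau\neq0$ for all $l\geq0$. Since $w_c(\xi)$ has infinite order, the elements $h\,w_c(\xi)^l$ are pairwise distinct, so $\alpha$ would have infinitely many nonzero coefficients — a contradiction. Hence $n_h^\tau=0$ for all $h$ and all $2$-cells $\tau$, so $\pi_2(K)=0$ and $K$ is aspherical.

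The step I expect to be the main obstacle is this first translation: identifying $L$ with the order complex of the subposet $Y$ and tracking components and fundamental groups through the two McCord weak equivalences and through $W_c$, and in particular arranging the closed edge-path to be based at a prescribed $2$-cell and reduced so that consecutive $2$-cells along it are distinct — this last point is exactly what makes the equation at each $w_i$ couple the two coefficients $n^{x_i}$ and $n^{x_{i+1}}$ and nothing else. Once this is set up, the propagation-plus-finite-support argument is routine.
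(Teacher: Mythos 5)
Your argument is correct and is essentially the paper's own proof of Theorem \ref{main5}: the passage to the subposet $Y$ with $L=\kp(Y)$, the minimal-length closed edge-path $\xi$ at $\tau$ of infinite-order weight, the two-term relation at each height-one point $w_i$ covered by exactly two maximal elements, and the propagation $n_h^\tau\neq 0\Rightarrow n_{h\,w_c(\xi)^l}^\tau\neq 0$ for all $l$, contradicting finite support. The only content specific to this Remark --- that since $\pi_1(K')\cong\pi_1(K)$ is torsion-free, a non-trivial image $i_*(\pi_1(M))$ automatically contains an element of infinite order, so that Theorem \ref{main5} applies verbatim --- is used only implicitly in your first paragraph, and making that one-line reduction explicit would let you cite Theorem \ref{main5} instead of reproving it.
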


We derive from Theorem \ref{main5} a result on asphericity of group presentations. This result resembles in some sense the homological description of $\pi_2$ using Reidemeister chains \cite[Thm 3.8]{Si} (See also \cite{Bo}). Given a group presentation $P$, let $K_P$ be the usual two-dimensional CW-complex associated to the presentation, which has one $0$-cell, one $1$-cell for each generator and one $2$-cell for each relator. The presentation $P$ is called aspherical if $K_P$ is aspherical. In order to study asphericity of $P$,  we will construct a digraph $D_P$ associated to $P$ together with a $G$-coloring. First note that the notion of a $G$-coloring naturally extends to directed graphs. A $G$-coloring of a digraph $D$ is a labeling of the edges of $D$ by elements in $G$. We allow loops and parallel edges which could have different colors. The color of the inverse of an edge $e$ is the inverse $c(e)^{-1}$ of the color of $e$.
A $G$-coloring $c$ induces a weight map $w_c$. If $\alpha = e_0e_1\ldots e_n$ is a cycle in the underlying undirected graph of $D$ (for each $i$, $e_i$ is an edge of $D$ or $e_i^{-1}$ is an edge of $D$), then $w_c(\alpha)=c(e_0)c(e_1)\ldots c(e_n)$.

%Each $r_j$ is a word with letters of the form $a_i$ or $a_i^1$. The number of times a letter $a_i$ appears in $r_j$ is the number of letters $a_i$ plus  the number of letters $a_i ^{-1}$ that

Let $P=<a_1,a_2, \ldots , a_k \ | \ r_1, r_2, \ldots r_s>$ be a presentation of a group $G$. The vertices of the directed graph $D_P$ are the letters $a_i$ which appear in total exactly twice in the words
$r_1, r_2, \ldots ,r_s$. So, $a_i$ appears either with exponent $2$ or $-2$ in one of the relators and does not appear in any other relator, or it appears twice (in the same relator or in two different relators) with exponent $1$ or $-1$ each time.
Each vertex of $D_P$ will be the source of exactly two oriented edges and the target of two directed edges. Let $r=r_j=a_{i_0}^{\epsilon_0}a_{i_1}^{\epsilon_1} \ldots a_{i_{t-1}}^{\epsilon_{t-1}}$ be one of the relators of $P$, $\epsilon_l=\pm 1$ for every $l\in \mathbb{Z}_t$.
We consider $r$ as a cyclic word, so for example $a_{i_1}$ comes after $a_{i_0}$ and $a_{i_0}$ comes after $a_{i_{t-1}}$. Suppose $a_{i_l}$ is a vertex of $D_P$. We consider the first letter $a_{i_{l+m}}$ coming after
$a_{i_l}$ which is a vertex of $D_P$ (i.e. the minimum $m>0$ such that $a_{i_{l+m}} \in D_P$). It could be a letter different from $a_{i_l}$ or the same letter if $a_{i_l}$ appears twice in $r$ or if it appears once and no other $a_{i_s}$ is a vertex of $D_P$. Then $(a_{i_l}, a_{i_{l+m}})$ is
a directed edge of $D_P$ and the color corresponding to that edge is the subword $ga_{i_{l+1}}^{\epsilon_{l+1}}a_{i_{l+2}}^{\epsilon_{l+2}} \ldots a_{i_{l+m-1}}^{\epsilon_{l+m-1}}h\in G$ where $g=1$ if $\epsilon _{l}=1$ and $g=a_{i_{l}}^{\epsilon_{l}}$ if $\epsilon_l=-1$,
$h=1$ if $\epsilon _{l+m}=-1$ and $h=a_{i_{l+m}}^{\epsilon_{l+m}}$ if $\epsilon_{l+m}=1$.

The next example illustrates the situation.

\begin{ej} \label{exocto}
Figure \ref{grafou} shows the digraph $D_P$ corresponding to the presentation $P=$ $<a,b,c,d,e \ | \ b^2ca^{-1}b^{-1}dba, c^{-1}debe>$. Its vertices are $a,c,d$ and $e$. 

\begin{figure}[h] 
\begin{center}
\includegraphics[scale=0.6]{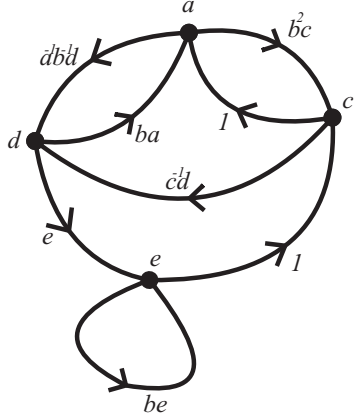}
\caption{The digraph $D_P$ associated to $P$.}\label{grafou}
\end{center}
\end{figure} 
\end{ej}

\begin{teo} \label{main6}
Let $P$ be a presentation of a group $G$. Suppose that every relator in $P$ contains a letter which is a vertex of $D_P$. If each component of $D_P$ contains a cycle whose weight has infinite order in $G$, then $P$ is aspherical. 
\end{teo}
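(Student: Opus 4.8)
The plan is to reduce the statement to Theorem \ref{main5}. First I would replace $K_P$ by a two-dimensional \emph{regular} CW-complex $K$ of the same homotopy type, obtained by a suitable subdivision: keep the single $0$-cell, replace each loop $a_i$ by a circle with two $0$-cells and two $1$-cells, and attach each relator's $2$-cell as a polygon coned from its barycenter, its boundary being subdivided to match the subdivided $1$-skeleton. Then $\pi_1(K)=G$ and $\pi_2(K)=\pi_2(K_P)$, and the essential bookkeeping point is that, after this subdivision, a $1$-cell of $K$ is a face of exactly two $2$-cells precisely when it arises from a generator appearing exactly twice in total among the relators, i.e.\ from a vertex of $D_P$ (note this includes a letter used twice inside a single relator: the two polygon-edges carrying it become faces of two distinct triangles). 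Consequently, in the subcomplex $L\subseteq K'$ of Theorem \ref{main5}, the barycenters of the sub-$2$-cells of a fixed relator $r$ together with the chords emanating from the barycenter of $r$ form a connected subset $S_r$ of $L$, and $S_r$ lies in the same component of $L$ as the barycenters of the $1$-cells of $K$ coming from the vertices of $D_P$ that occur in $r$.

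The next step is to match $D_P$, with its $G$-coloring, to $L$ equipped with the coloring $\hat c$ of Theorem 4.4 in \cite{paper1}. The vertices of $D_P$ sit among the barycenters of $1$-cells of $L$, and an edge of $D_P$ from $a_{i_l}$ to $a_{i_{l+m}}$ — two vertices of $D_P$ consecutive inside a relator $r$ — corresponds to an edge-path in the poset $Y$ with $\kp(Y)=L$ running from the barycenter of $a_{i_l}$ through $S_r$ to the barycenter of $a_{i_{l+m}}$. A routine check from the definitions of $D_P$ and of its coloring (the correction factors $g,h$ being exactly what the incidences $[x:y]$ and the coloring $\hat c$ force) shows that the $G$-weight of a cycle of $D_P$ equals the image in $G$ of the corresponding closed edge-path of $Y$ under the isomorphism $W_c\colon\hh(\x(K),x_0)\to G$. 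Since the vertices of $D_P$ occurring in a fixed relator form, in cyclic order, a cycle of $D_P$, each $S_r$ meets exactly one component of $D_P$; and since by hypothesis every relator contains a vertex of $D_P$, every component of $L$ meets $D_P$ and in fact contains the barycenters of an entire component of $D_P$.

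Now fix a component $M$ of $L$ and let $N\subseteq D_P$ be the component of $D_P$ whose vertex-barycenters lie in $M$. By hypothesis $N$ contains a cycle of weight $w$ of infinite order in $G$; the associated closed edge-path $\xi$ lies in the component of $Y$ corresponding to $M$, so its class lies in $i_*(\pi_1(M))\leq\pi_1(K')$, and under the isomorphism $W_c$ identifying $\pi_1(K')$ with $G$ it is sent to $w$, an element of infinite order. Thus $i_*(\pi_1(M))$ contains an element of infinite order, the hypothesis of Theorem \ref{main5} is satisfied, and therefore $K$ — hence $K_P$ — is aspherical, i.e.\ $P$ is aspherical.

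The step I expect to be the main obstacle is this translation layer: producing a regular model $K$ of $K_P$ in which the subcomplex $L$ and the coloring $\hat c$ faithfully reproduce $D_P$ and its $G$-coloring — in particular verifying that a letter used twice in one relator still yields $1$-cells that are faces of two $2$-cells, and that the correction factors in the definition of the coloring of $D_P$ are exactly those forced by the incidences together with $\hat c$. One can also bypass the explicit construction of $K$ by re-deriving the equations for $\pi_2$ directly in terms of the presentation — coefficients $n^r_g$ indexed by relators $r$ and elements $g\in G$, one equation of the form $\pm\, n^{r_1}_{h\,c(w,x_1)}\pm n^{r_2}_{h\,c(w,x_2)}=0$ for each vertex $w$ of $D_P$ and each $h\in G$ — after which the propagation argument of Theorem \ref{main5} applies verbatim with $D_P$ in place of $Y$: if $n^r_g\neq 0$, follow from a vertex of $D_P$ in $r$ a cycle of infinite-order weight $w$ to obtain $n^{r'}_{g w^{l}}\neq 0$ for all $l\ge 0$, contradicting the finiteness of the support of a chain.
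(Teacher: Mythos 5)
Your proposal follows the paper's own route essentially step for step: barycentrically subdivide $K_P$ into a regular complex $K$, observe that the subcomplex $L$ of Theorem \ref{main5} consists of the interior cycles $C_r$ of the $2$-cells together with the barycenters of the edges coming from vertices of $D_P$, match components of $L$ with components of $D_P$ via the hypothesis that every relator meets $D_P$, and check that a cycle of $D_P$ yields a loop in $L$ whose class in $\pi_1(K')\cong G$ is its weight (the correction factors $g,h$ arising exactly from the basepoint/traversal conventions), so that Theorem \ref{main5} applies. The "routine check" you defer is precisely the homotopy-to-the-boundary computation the paper carries out, so the argument is correct and essentially identical to the published one.
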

\begin{proof}
We subdivide $K_P$ barycentrically to obtain a regular CW-complex $K$ as usual. Each $1$-cell corresponding to a generator $a$ in $P$ is subdivided in two $1$-cells $e_{a_0}$ and $e_{a_1}$ sharing the unique vertex $v$ of $K_P$ and a new vertex $v_a$. The $2$-cell $f_r$ corresponding to a relator $r$ of $P$ is subdivided in $2m$ $2$-cells where $m$ is the number of letters in $r$, adding a new $0$-cell $v_r$ in the interior of the original $2$-cell. Let $L$ be the $1$-dimensional subcomplex of $K'$ defined as in the statement of Theorem \ref{main5}. The vertices of $L$ are the barycenters of the $2$-cells of $K$ and the barycenters of the $1$-cells which are faces of exactly two $2$-cells. In the interior of the cell $f_r$ there are exactly $4m$ vertices of $L$ (the barycenters of the $2m$ $2$-cells and the barycenters of the $2m$ edges from $v_r$ to $v$ and to each $v_a$). This $1$-dimensional complex of $4m$ vertices is a cycle that we denote $C_r$.
The remaining vertices of $L$ are the barycenters $b(e_{a_0})$ and $b(e_{a_1})$ for each letter $a$ which is a vertex of $D_P$. We show that the hypotheses of the theorem ensure that the hypotheses of Theorem \ref{main5} are fulfilled.

\begin{figure}[h] 
\begin{center}
\includegraphics[scale=0.65]{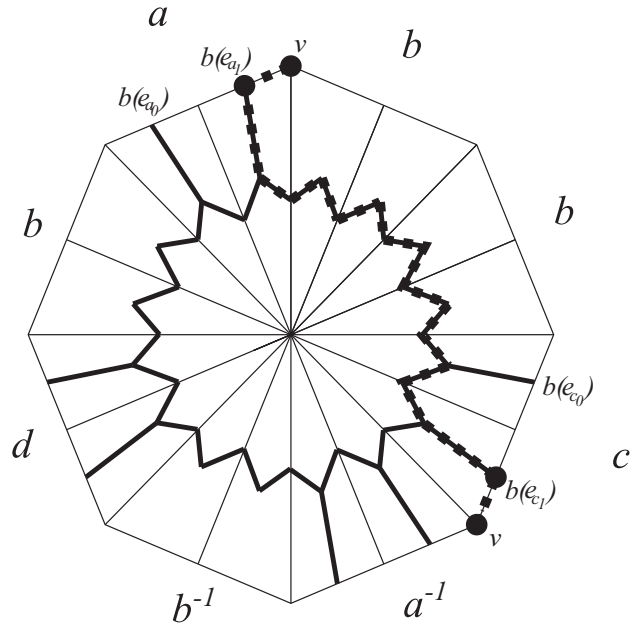}
\caption{The $2$-cell $f_r$. Here $r$ is the first relator of $P$ in Example \ref{exocto}. The simplices of the subcomplex $L$ are drawn with thick lines and the edge-path $\gamma _i$  homotopic to $b^2c\in G$ appears with dotted segments.}\label{octo}
\end{center}
\end{figure}

Since each relator contains a letter which is a vertex of $D_P$, the components of $D_P$ are in bijection with the components of $L$. Suppose $a$ and $c$ are vertices of $D_P$ and that there is an edge $(a,c)\in D_P$ (or $(c,a)$). Then, there is a relator $r$ of $P$ such that $a$ and $c$ are letters of $r$. Since $a,c \in D_P$, $b(e_{a_1})$ and $b(e_{c_1})$ are vertices of $L$ and they lie in the $2$-cell $f_r$ of $K_P$ corresponding to $r$. Moreover, there is an edge in $L$ from $b(e_{a_1})$ to the cycle $C_r$ and an edge from $b(e_{c_1})$ to $C_r$. Therefore there is and edge-path in $L$ from $b(e_{a_1})$ to $b(e_{c_1})$ entirely contained in $f_r$ (see Figure \ref{octo}). A cycle $\alpha$ in $D_P$ with base point $a$, has associated then a closed edge-path $\xi$ in $L$ at $b(e_{a_1})$. We will show that the order of $\xi$ in the edge-path group $E (K', b(e_{a_1}))$ is infinite or, equivalently, that the order of $\hat{\xi}=(v, b(e_{a_1}))\xi (b(e_{a_1}),v) \in E (K',v)$ is infinite. The edge-path $\hat{\xi}'$ obtained from 
$\hat{\xi}$ by inserting the edge-paths $(b(e_{l_1}),v)(v,b(e_{l_1}))$ at each vertex $b(e_{l_1})$ ($l$ a letter in $\alpha$) is equivalent to $\hat{\xi}$. Suppose $a=l^0, l^1, \ldots , l^k=a$ are the vertices of $\alpha$.  The edge-path $\hat{\xi}'$ is a composition of closed edge-paths $\gamma _i$ in $K'$ at $v$, each of them contained in a $2$-cell $f_{r_i}$.  The edge-path $\gamma _i$, as an element of $\pi _1 (K,v)$, is homotopic to a loop contained in the boundary of $f_{r_i}$ which is, as an element of $G$, the color of the edge $(l^i,l^{i+1})$ in $\alpha$. Thus, $\hat{\xi}'\in \pi _1 (K,v)\simeq G$ coincides with the weight of $\alpha$ and the first one has infinite order provided the second one does.   
   
\end{proof}

In Example \ref{exocto} there is an edge from $c$ to $d$ with color $c^{-1}d$, an edge from $a$ to $d$ with color $a^{-1}b^{-1}d$ and an edge from $a$ to $c$ with color $b^2c$. Therefore, there is a cycle with base point $c$ whose weight is $c^{-1}d(a^{-1}b^{-1}d)^{-1}b^2c=c^{-1} bab^2c\in G$. It is easy to verify that this element has infinite order, since $a+3b$ clearly has infinite order in the abelianization $G/[G:G]$. Since $D_P$ has a unique component and both relators of $P$ have at least one letter in $D_P$, Theorem \ref{main6} applies. This shows that $P$ is aspherical.

\end{document}